\theoremstyle{plain}
\newtheorem{Theorem}{Theorem}[section]
\newtheorem{Cor}[Theorem]{Corollary}
\newtheorem{definition}[Theorem]{Definition}
\newtheorem*{theorem*}{Theorem}
\theoremstyle{remark}
\newtheorem{remark}[Theorem]{\bf{Remark}}
\newtheorem{example}[Theorem]{\bf{Example}}
\journal{Arxiv}
\begin{document}
	\begin{frontmatter}
		\title{A Necessary and Sufficient Condition for the Existence of Global Solutions to Semilinear Parabolic Equations on bounded domains}
		
		\author[a]{Soon-Yeong Chung\corref{cjhp}}
		\ead{sychung@sogang.ac.kr}
		\author[b]{Jaeho Hwang}
		\ead{hjaeho@sogang.ac.kr}

		\cortext[cjhp]{Corresponding author}
		\address[a]{Department of Mathematics and Program of Integrated Biotechnology, Sogang University, Seoul 04107, Republic of Korea}
		\address[b]{Research Institute for Basic Science, Sogang University, Seoul 04107, Republic of Korea}

		\begin{abstract}
			The purpose of this paper is to give a necessary and sufficient condition for the existence and non-existence of global solutions of the following semilinear parabolic equations
			\[
			u_{t}=\Delta u+\psi(t)f(u),\,\,\mbox{ in }\Omega\times (0,t^{*}),
			\]
			under the Dirichlet boundary condition on a bounded domain. In fact, this has remained as an open problem for a few decades, even for the case $f(u)=u^{p}$. As a matter of fact, we prove:\\
			\[
			\begin{aligned}
			&\mbox{there is no global solution for any initial data if and only if }\\
			&\mbox{the function } f \mbox{ satisfies}\\
			&\hspace{20mm}\int_{0}^{\infty}\psi(t)\frac{f\left(\lVert S(t)u_{0}\rVert_{\infty}\right)}{\lVert S(t)u_{0}\rVert_{\infty}}dt=\infty\\
			&\mbox{for every }\,\epsilon>0\,\mbox{ and nonnegative nontrivial initial data }\,u_{0}\in C_{0}(\Omega).
			\end{aligned}
			\]
			Here, $(S(t))_{t\geq 0}$ is the heat semigroup with the Dirichlet boundary condition.
		\end{abstract}
		
		\begin{keyword}
			Semilinear parabolic equation, Fujita blow-up, Critical exponent
			\MSC [2010] 35F31 \sep 35K91 \sep 35K57 
		\end{keyword}
	\end{frontmatter}

\section{Introduction}
In his seminal paper \cite{F}, Fujita firstly studied the reaction-diffusion equation
\[
u_{t}=\Delta u+u^{p},\,\,\mbox{ in }\mathbb{R}^{N}\times (0,t^{*}),
\]
where $p>1$, and obtained that
\begin{itemize}
	\item[(i)] if $1<p<p^{*}$, then there is no global solution for any initial data,
	\item[(ii)] if $p>p^{*}$, then there exists a global solution whenever the initial data is sufficiently small,
\end{itemize}
where $p^{*}=1+\frac{2}{N}$ is called the critical exponent. After his results, researchers obtained that there is no global solution for $p=p^{*}$ (see \cite{Ha} for the case $N=1$ or $2$ and \cite{AW} for the case $N\geq 3$).

It is easy to see that the critical exponent $p^{*}$ leads a necessary and sufficient condition for the existence of the global solutions as above. Therefore, lots of researchers have been studied the critical exponent for various reaction-diffusion equations to find necessary and sufficient conditions for the existence of the global solutions (see the survey articles \cite{DL,L}).\\

In this paper, we discuss the existence and nonexistence of the global solutions to the reaction-diffusion equation for a general source term $\psi(t)f(u)$:
\begin{equation}\label{mainequation}
	\begin{cases}
		u_{t}(x,t)=\Delta u(x,t)+\psi(t)f(u(x,t)),\,\,&(x,t)\in \Omega\times(0,t^{*}),\\
		u(x,t)=0,&(x,t)\in \partial\Omega\times(0,t^{*}),\\
		u(x,0)=u_{0}(x)\geq 0,&x\in \Omega,
	\end{cases}
\end{equation}
where $\Omega$ is a bounded domain in $\mathbb{R}^{N}$ with a smooth boundary $\partial\Omega$, $u_{0}$ is a nonnegative and nontrivial $C_{0}(\Omega)$-function, and $\psi$ is a nonnegative continuous function on $[0,\infty)$. Throughout this paper, we always assume that $f$ satisfies the following conditions:
\begin{itemize}
	\item[$\cdot$] $f:[0,\infty)\rightarrow [0,\infty)$ is a locally Lipschitz continuous function,
	\item[$\cdot$] $f(0)=0$ and $f(s)>0$ for $s>0$,
\end{itemize}
Then it is well-known that the local existence of the solutions of the equation \eqref{mainequation} and the comparison principle are guaranteed by the first condition.\\

In his pioneering paper \cite{M}, Meier studied the global existence and nonexistence of the solutions to the reaction-diffusion equations \eqref{mainequation}, where $\Omega$ is a general (bounded or unbounded) domain in $\mathbb{R}^{N}$, under the Dirichlet boundary condition and obtained the following result:
\begin{Theorem}[\cite{M}]\label{Meier}
	Assume that $\psi\in C[0,\infty)$ and $f(u)=u^{p}$ for $p>1$.
	\begin{itemize}
		\item[(i)] If $\limsup_{t\rightarrow \infty}\lVert S(t)u_{0}\rVert_{\infty}^{p-1}\int_{0}^{t}\psi(\tau)d\tau = \infty$ for every $u_{0}\in C_{0}(\Omega)$, then there is no global solution for any nonnegative and nontrivial initial data.
		\item[(ii)] If $\int_{0}^{\infty}\psi(\tau)\lVert S(\tau)u_{0}\rVert_{\infty}^{p-1}d\tau <\infty$ for some $u_{0}\in C_{0}(\Omega)$, then there exists global solution for sufficiently small initial data.
	\end{itemize}
	Here, $(S(t))_{t\geq 0}$ is the heat semigroup with the Dirichlet boundary condition.
\end{Theorem}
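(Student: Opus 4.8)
The plan is to handle the two assertions separately, relying on the variation-of-constants formula $u(t)=S(t)u_0+\int_0^t S(t-s)\psi(s)u(s)^p\,ds$, the comparison principle, and the bounded-domain fact that $\|S(t)u_0\|_\infty$ is comparable to $e^{-\lambda_1 t}$ for every nonnegative nontrivial $u_0\in C_0(\Omega)$; throughout, $\lambda_1>0$ and $\phi_1>0$ (normalized so that $\int_\Omega\phi_1\,dx=1$) denote the first Dirichlet eigenvalue and eigenfunction of $-\Delta$ on $\Omega$.

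\emph{Global existence, part (ii).} I would build a global supersolution from the witness $u_0$ supplied by the hypothesis (which may be taken nonnegative). With the ansatz $\bar u(x,t)=a(t)\,S(t)u_0(x)$, the pointwise bound $(S(s)u_0)^p\le\|S(s)u_0\|_\infty^{p-1}S(s)u_0$ together with the semigroup identity $S(t-s)S(s)=S(t)$ reduces the supersolution inequality $\bar u(t)\ge S(t)(\varepsilon u_0)+\int_0^t S(t-s)\psi(s)\bar u(s)^p\,ds$ to the scalar inequality $a(t)\ge\varepsilon+\int_0^t\psi(s)a(s)^p\|S(s)u_0\|_\infty^{p-1}\,ds$. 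Since $C:=\int_0^\infty\psi(s)\|S(s)u_0\|_\infty^{p-1}\,ds<\infty$ by hypothesis, the constant choice $a\equiv 2\varepsilon$ works whenever $2^p\varepsilon^{p-1}C\le 1$. The monotone iteration $v^{(0)}=0$, $v^{(n+1)}(t)=S(t)(\varepsilon u_0)+\int_0^t S(t-s)\psi(s)(v^{(n)}(s))^p\,ds$ is then nondecreasing and bounded above by $\bar u$, so it converges to a bounded global solution; by comparison every datum $\le\varepsilon u_0$ gives a global solution, which is the assertion.

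\emph{Nonexistence, part (i).} Here I would run Kaplan's eigenfunction argument. If a solution $u$ exists on $[0,T^*)$, put $v(t):=\int_\Omega u(x,t)\phi_1(x)\,dx$; differentiating, integrating by parts (both $u$ and $\phi_1$ vanish on $\partial\Omega$), and invoking Jensen's inequality for the probability measure $\phi_1\,dx$ give $v'(t)\ge-\lambda_1 v(t)+\psi(t)v(t)^p$, with $v(0)=\int_\Omega u_0\phi_1\,dx>0$, and $v$ stays positive. The substitution $h=v^{1-p}$ turns this into $\big(e^{-(p-1)\lambda_1 t}h\big)'\le-(p-1)e^{-(p-1)\lambda_1 t}\psi(t)$; integrating from $0$ to any $T<T^*$ and using $h>0$ yields $\int_0^T e^{-(p-1)\lambda_1 t}\psi(t)\,dt\le v(0)^{1-p}/(p-1)$. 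Hence if the solution were global, then $\int_0^\infty e^{-(p-1)\lambda_1 t}\psi(t)\,dt<\infty$.

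It remains to contradict this under the hypothesis of (i). Applying that hypothesis to the admissible choice $u_0=\phi_1$, for which $\|S(t)\phi_1\|_\infty=e^{-\lambda_1 t}\|\phi_1\|_\infty$, gives $\limsup_{t\to\infty}e^{-(p-1)\lambda_1 t}\int_0^t\psi(\tau)\,d\tau=\infty$. On the other hand, with $\alpha=(p-1)\lambda_1$, the elementary estimate $e^{-\alpha t}\int_0^t\psi(\tau)\,d\tau=e^{-\alpha t}\int_0^t e^{\alpha\tau}\big(e^{-\alpha\tau}\psi(\tau)\big)\,d\tau\le\int_0^\infty e^{-\alpha\tau}\psi(\tau)\,d\tau$ shows that $\int_0^\infty e^{-\alpha t}\psi(t)\,dt<\infty$ would make that $\limsup$ finite. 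This contradiction shows no global solution exists, for any nonnegative nontrivial initial datum, since the argument in part (i) used only $\int_\Omega u_0\phi_1\,dx>0$. The step I expect to require the most care is exactly this matching: the Kaplan estimate naturally outputs the exponentially weighted integral $\int_0^\infty e^{-(p-1)\lambda_1 t}\psi(t)\,dt$, and one must notice that only the one-sided comparison between it and $\sup_t e^{-(p-1)\lambda_1 t}\int_0^t\psi$ holds (precisely the direction needed), and then combine it with the asymptotics $\|S(t)u_0\|_\infty\asymp e^{-\lambda_1 t}$; a naive iteration of the Duhamel formula does not directly produce the nonexistence bound.
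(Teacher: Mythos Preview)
The paper does not contain a proof of this statement: Theorem~\ref{Meier} is quoted from Meier's original article \cite{M} and is used only as background, so there is no in-paper argument to compare against. Your proposal is therefore not redundant with anything in the text.

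On the mathematics itself, your argument is sound. Part~(ii) is the standard supersolution construction: the pointwise bound $(S(s)u_0)^p\le\|S(s)u_0\|_\infty^{p-1}S(s)u_0$ and the semigroup identity reduce the Duhamel inequality to a scalar one, and the constant choice $a\equiv 2\varepsilon$ with $2^p\varepsilon^{p-1}C\le 1$ closes it. For part~(i), Kaplan's eigenfunction method gives exactly the differential inequality you wrote, and your computation $(e^{-(p-1)\lambda_1 t}h)'\le -(p-1)e^{-(p-1)\lambda_1 t}\psi(t)$ is correct; integrating yields $\int_0^\infty e^{-(p-1)\lambda_1 t}\psi(t)\,dt<\infty$ for any global solution. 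The final step---bounding $e^{-\alpha t}\int_0^t\psi$ by $\int_0^\infty e^{-\alpha\tau}\psi\,d\tau$ via $e^{\alpha\tau}\le e^{\alpha t}$ on $[0,t]$---is the right one-sided inequality, and invoking the hypothesis at $u_0=\phi_1\in C_0(\Omega)$ is legitimate.

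Two remarks. First, your proof is written for bounded $\Omega$ (you use $\lambda_1>0$ and $\|S(t)u_0\|_\infty\asymp e^{-\lambda_1 t}$), whereas Meier's original statement covers general domains; since the present paper restricts to bounded domains this is acceptable here, but you should flag the restriction. Second, the techniques you use---Jensen with the first eigenfunction for blow-up, and a supersolution of the form $z(t)\cdot(\text{heat solution})$ for global existence---are precisely the ones the paper deploys in its proof of Theorem~\ref{mainthm1}, so although the paper does not prove Theorem~\ref{Meier}, your approach is entirely consonant with its methods.
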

Meier give two sufficient conditions for the existence and nonexistence of the global solutions. However, a necessary and sufficient condition for the existence of the global solutions to the equation \eqref{mainequation} is unknown, even for the case $f(u)=u^{p}$ and has remained as an open problem for a few decades. To our best knowledge, researches of necessary and sufficient conditions for the global existence of solutions for the reaction-diffusion equations in the current literature consider several specific source terms such as $t^{\sigma}u^{p}$, $e^{\beta t}u^{p}$, and so on (see \cite{M,BZW,Q,QW}).

Recent researches of the equation \eqref{mainequation} have adopted Meier's criterion and give several sufficient conditions for the blow-up solutions and global solutions (for example, see \cite{LP,CL1,CL2}). In conclusion, the open problem has faced methodological limitations and there has been no progress in research on necessary and sufficient conditions for the general source term $\psi(t)f(u)$.\\

From the above point of view, the purpose of this paper is twofold as follows:
\begin{itemize}
	\item[(i)] to obtain the necessary and sufficient condition for the existence of the global solutions for more general source term $\psi(t)f(u)$.
	\item[(ii)] to introduce a method, so called a minorant method, to deal with $f(u)$ in the source term.
\end{itemize}

In order to solve the open problem mentioned above for more general source term $f(u)$ instead of $u^{p}$, we need to introduce the minorant function $f_{m}$ and the majorant function $f_{M}$ to overcome a multiplicative property as follows:
\[
f_{m}(u):=\inf_{0<\alpha<1}\frac{f(\alpha u)}{f(\alpha)}\,\mbox{ and }\,f_{M}(u):=\sup_{0<\alpha<1}\frac{f(\alpha u)}{f(\alpha)}.
\]
The minorant function $f_{m}$ and the majorant function $f_{M}$ were firstly discussed in \cite{CH}. By the definition, the function $f$ satisfies the following inequality:
\[
f_{m}(u)f(\alpha)\leq f(u \alpha)\leq f_{M}(u)f(\alpha),\,\,\,u>0,\,\,\,0<\alpha<1.
\]
Then the following condition is the necessary condition for blow-up solutions in view of the minorant method (see \cite{Fr,H}):
\begin{equation}\label{assum}
	\int_{1}^{\infty}\frac{ds}{f_{m}(s)}<\infty.
\end{equation}

Finally, we obtained the following results to see `completely' whether or not we have global solutions:
\begin{Theorem}\label{mainthm1}
	Let $f$ be a convex function satisfying the assumption \eqref{assum} and $\psi$ be a nonnegative continuous function. Then the following statements are equivalent.
	\begin{itemize}
		\item[(i)] there is no global solution $u$ to the equation \eqref{mainequation} for any nonnegative and nontrivial initial data $u_{0}$.
		\item[(ii)] \[
		\int_{0}^{\infty}\psi(t)e^{\lambda_{0}t}f\left(\epsilon e^{-\lambda_{0}t}\right)dt=\infty
		\]
		for every $\epsilon>0$.
		\item[(iii)] \[\int_{0}^{\infty}\psi(t)\frac{f\left(\lVert S(t)u_{0}\rVert_{\infty}\right)}{\lVert S(t)u_{0}\rVert_{\infty}}dt=\infty\]
		for every nonnegative nontrivial initial data $u_{0}\in C_{0}(\Omega)$.
	\end{itemize}
	Here, $(S(t))_{t\geq 0}$ is the heat semigroup with the Dirichlet boundary condition and $\lambda_{0}$ is the first Dirichlet eigenvalue of the Laplace operator $\Delta$.
\end{Theorem}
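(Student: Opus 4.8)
The plan is to establish the two equivalences (i) $\Leftrightarrow$ (ii) and (ii) $\Leftrightarrow$ (iii), with (ii) serving as a scalar bridge between the PDE \eqref{mainequation} and an integral condition. Throughout, let $\phi_0>0$ be a first Dirichlet eigenfunction, $-\Delta\phi_0=\lambda_0\phi_0$, and recall that since $f$ is convex with $f(0)=0$ the quotient $s\mapsto f(s)/s$ is nondecreasing and $f$ itself is nondecreasing. The equivalence (ii) $\Leftrightarrow$ (iii) is essentially a change of variables governed by the long-time behaviour of the Dirichlet heat semigroup: for a nonnegative nontrivial $u_0\in C_0(\Omega)$ one has $c_1e^{-\lambda_0 t}\le\lVert S(t)u_0\rVert_\infty\le c_2e^{-\lambda_0 t}$ for $t\ge T_0$, with $c_1,c_2>0$ depending on $u_0$ (because $\phi_0>0$ in $\Omega$), while $t\mapsto\lVert S(t)u_0\rVert_\infty$ is continuous and positive on $[0,\infty)$. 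Using monotonicity of $f(s)/s$ to sandwich $\psi(t)\frac{f(\lVert S(t)u_0\rVert_\infty)}{\lVert S(t)u_0\rVert_\infty}$ between $\psi(t)\frac{f(c_ie^{-\lambda_0 t})}{c_ie^{-\lambda_0 t}}$ for $i=1,2$ and integrating, (ii) (which is assumed for every $\epsilon>0$) yields (iii); conversely, applying (iii) to the single datum $u_0=\epsilon\phi_0/\lVert\phi_0\rVert_\infty$, for which $\lVert S(t)u_0\rVert_\infty=\epsilon e^{-\lambda_0 t}$ exactly, reproduces (ii) for that $\epsilon$. This settles (ii) $\Leftrightarrow$ (iii).

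For (ii) $\Rightarrow$ (i) I would run Kaplan's eigenfunction argument. Normalize $\int_\Omega\phi_0\,dx=1$, suppose a global solution $u\ge0$ exists, and set $F(t):=\int_\Omega u(x,t)\phi_0(x)\,dx$. Green's identity (both $u$ and $\phi_0$ vanish on $\partial\Omega$) and Jensen's inequality for the probability measure $\phi_0\,dx$ give
\[
F'(t)=-\lambda_0F(t)+\psi(t)\int_\Omega f(u)\phi_0\,dx\ \ge\ -\lambda_0F(t)+\psi(t)f(F(t)).
\]
Then $G(t):=e^{\lambda_0 t}F(t)$ is nondecreasing, $G(0)=F(0)=:\epsilon_0>0$, and $G'(t)\ge e^{\lambda_0 t}\psi(t)f(e^{-\lambda_0 t}G(t))$. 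Since $f$ is nondecreasing and $G(t)\ge\epsilon_0$, integrating $G'(t)\ge e^{\lambda_0 t}\psi(t)f(\epsilon_0 e^{-\lambda_0 t})$ and using (ii) with $\epsilon=\epsilon_0$ forces $G(t)\to\infty$, so $G(t_1)\ge1$ for some $t_1\ge1$. For $t\ge t_1$ we have $0<e^{-\lambda_0 t}<1$, hence the minorant inequality $f_m(G(t))f(e^{-\lambda_0 t})\le f(e^{-\lambda_0 t}G(t))$ applies, giving $G'(t)/f_m(G(t))\ge e^{\lambda_0 t}\psi(t)f(e^{-\lambda_0 t})$. Integrating from $t_1$ to $T$ and using that $f_m$ is nondecreasing with $f_m(1)=1$ (so $f_m(G(t))\ge1$), the left-hand side is at most $\int_1^\infty ds/f_m(s)$, finite by \eqref{assum}, while the right-hand side tends to $\int_{t_1}^\infty\psi(t)e^{\lambda_0 t}f(e^{-\lambda_0 t})\,dt=\infty$ by (ii) with $\epsilon=1$. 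This contradiction proves (ii) $\Rightarrow$ (i).

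For (i) $\Rightarrow$ (ii) I would argue by contraposition and construct a global supersolution with small nontrivial data. Assuming $\int_0^\infty\psi(t)e^{\lambda_0 t}f(\epsilon e^{-\lambda_0 t})\,dt<\infty$ for some $\epsilon>0$, dominated convergence (the integrand decreases pointwise to $0$ as $\epsilon\downarrow0$, dominated by the original one) lets us shrink $\epsilon$ so that this integral is $<\epsilon/2$. Fix a nonnegative nontrivial $w\in C_0(\Omega)$ with $\lVert w\rVert_\infty$ suitably small and seek a supersolution $\bar u(x,t)=\theta(t)[S(t)w](x)$, $\theta\in C^1$, $\theta>0$. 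Since $\partial_tS(t)w=\Delta S(t)w$, one has $\bar u_t-\Delta\bar u=(\theta'/\theta)\bar u$, so $\bar u$ is a supersolution once $\theta'/\theta\ge\psi(t)f(\bar u)/\bar u$ where $\bar u>0$. By the standard decay bound $\lVert S(t)v\rVert_\infty\le Ce^{-\lambda_0 t}\lVert v\rVert_\infty$ and monotonicity of $f(s)/s$, it suffices to have $\theta'/\theta\ge\psi(t)f(\eta(t))/\eta(t)$ with $\eta(t):=Ce^{-\lambda_0 t}\lVert w\rVert_\infty\theta(t)$; setting $\eta(t)=e^{-\lambda_0 t}\zeta(t)$, this reduces to the scalar ODE $\zeta'(t)=e^{\lambda_0 t}\psi(t)f(e^{-\lambda_0 t}\zeta(t))$, $\zeta(0)=\epsilon/2$. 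A continuation (bootstrap) argument then shows that while $\zeta\le\epsilon$ one has $\zeta(t)\le\epsilon/2+\int_0^\infty\psi(s)e^{\lambda_0 s}f(\epsilon e^{-\lambda_0 s})\,ds<\epsilon$, so $\zeta$, hence $\theta$, hence $\bar u$, is global. The comparison principle (available since $f$ is locally Lipschitz) then gives a global solution for $u_0:=\bar u(\cdot,0)=\theta(0)w$, which is nonnegative, nontrivial and in $C_0(\Omega)$, contradicting (i).

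\emph{Main obstacle.} The decisive point is the passage in (ii) $\Rightarrow$ (i) from ``$G$ is unbounded'' to ``$G$ is not global'': the raw differential inequality only produces unbounded growth, and converting this into genuine non-global behaviour is precisely where the superlinearity encoded in \eqref{assum}, together with the minorant inequality (the only place $f_m$ — rather than $f$ — is the natural object), does the work. Secondary care is needed for the qualitative facts about $f_m$ invoked above ($f_m$ nondecreasing, $f_m(1)=1$, $f_m>0$ on $(0,\infty)$), for the uniform $L^\infty$-decay $\lVert S(t)v\rVert_\infty\le Ce^{-\lambda_0 t}\lVert v\rVert_\infty$, and for matching $\bar u$ to the initial data at $t=0$ in the supersolution construction.
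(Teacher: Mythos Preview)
Your arguments for (ii)$\Leftrightarrow$(iii) and (ii)$\Rightarrow$(i) are essentially the paper's: the former is the sandwich $\lVert S(t)u_0\rVert_\infty\sim e^{-\lambda_0 t}$, and the latter is Kaplan's eigenfunction method followed by the minorant inequality and integration against $1/f_m$. The paper packages the last step via $F_m(v)=\int_v^\infty dw/f_m(w)$ and its inverse rather than your two-stage ``first unbounded, then contradiction'', but the content is the same.

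Your (i)$\Rightarrow$(ii), however, is genuinely different from the paper's and contains one gap. The paper introduces the \emph{majorant} $f_M$, defines $F_M(v)=\int_v^{m^*}dw/f_M(w)$ on the maximal interval where $f_M<\infty$, and builds the explicit bounded supersolution $\bar u(x,t)=z(t)e^{-\lambda_0 t}\phi_0(x)$ with $z(t)=F_M^{-1}\bigl[F_M(z_0)-\tfrac1\epsilon\int_0^t\psi e^{\lambda_0 s}f(\epsilon e^{-\lambda_0 s})\,ds\bigr]$; the majorant is what turns $f(z\cdot v)$ into $f(v)f_M(z)$ and makes the ODE separable. Your route avoids $f_M$ entirely and closes a bootstrap on the scalar ODE $\zeta'=\psi e^{\lambda_0 t}f(e^{-\lambda_0 t}\zeta)$. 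This is more elementary (no need to worry about where $f_M$ blows up), but the step ``dominated convergence lets us shrink $\epsilon$ so that $\int_0^\infty\psi e^{\lambda_0 t}f(\epsilon e^{-\lambda_0 t})\,dt<\epsilon/2$'' does not follow from DCT as you state it: DCT on the integrand only gives $I(\epsilon)\to0$, not $I(\epsilon)/\epsilon\to0$, and the latter is what the bootstrap needs. The fix is to observe that \eqref{assum} forces $f'(0^+)=0$ (otherwise $f(\alpha u)/f(\alpha)\to u$ as $\alpha\to0^+$, so $f_m(u)\le u$ and $\int_1^\infty ds/f_m(s)=\infty$); then apply DCT to $\psi(t)\frac{f(\epsilon e^{-\lambda_0 t})}{\epsilon e^{-\lambda_0 t}}$, which decreases to $0$ and is dominated by its value at the original $\epsilon$, giving $I(\epsilon)/\epsilon\to0$. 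With that correction your supersolution construction goes through, and in fact yields a proof of (i)$\Rightarrow$(ii) that never mentions $f_M$.
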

Theorem \ref{mainthm1} is the form of a necessary and sufficient condition for global solutions of the equation \eqref{mainequation}. Therefore, the open problem mentioned above is solved with more general source term $\psi(t)f(u)$.

In general, the case $p=p^{*}$ and $p\neq p^{*}$ are dealt in a different way and cannot be solved at the same time (see \cite{AW,F,Ha,H}). However, we prove the cases all at once.

As far as authors know, there is no paper which discuss the necessary and sufficient condition (or Fujita's blow-up solutions) on the source term $f(u)$ instead of $u^{p}$. From this point of view, this paper and the minorant method will be clue to study long-time behaviors (especially, necessary and sufficient conditions) for solutions to PDEs with general type functions.

We organized this paper as follows: In Section \ref{discussion}, we discuss Meier's criterion. We introduce the minorant method and discuss main results in Section \ref{main}.

\section{Discussion on Meier's conditions}\label{discussion}
The purpose of this section is to discuss the necessary and sufficient condition for the existence of the global solutions, which has not been known and remained as an open problem. Let us deal with sufficient conditions for the blow-up solutions and global solutions to check that these conditions can be a necessary and sufficient condition.

From this point of view, let us discuss Meier's conditions. If the domain $\Omega$ is bounded, then it is well-known that $\lVert S(t)u_{0}\rVert_{\infty} \sim e^{-\lambda_{0} t}$ for $t\geq 0$, for every nonnegative and nontrivial initial data $u_{0}\in C_{0}(\Omega)$. Therefore, Theorem \ref{Meier} can be understood as follows:
\begin{itemize}
	\item[$\cdot$] If
	\[
	(C1)\,:\,\limsup_{t\rightarrow \infty} e^{-(p-1)\lambda_{0}t}\int_{0}^{t}\psi(\tau) d\tau=\infty,
	\]
	then there is no global solution to the equation \eqref{mainequation} for any nonnegative and nontrivial initial data.
	\item[$\cdot$] If
	\[
	(C2)\,:\,\int_{0}^{\infty}\psi(t) e^{-(p-1)\lambda_{0}t}dt<\infty
	\]
	then there exists a global solution to the equation \eqref{mainequation} for sufficiently small initial data.
\end{itemize}
Let us consider the function $\psi$ defined by $\psi(t):=(t+1)^{-\delta}e^{(p-1)\lambda_{0} t}$ for $0\leq \delta\leq 1$. Then it follows that
\[
\begin{aligned}
	\limsup_{t\rightarrow \infty} e^{-(p-1)\lambda_{0}t}\int_{0}^{t}\psi(\tau) d\tau=&\limsup_{t\rightarrow \infty} e^{-(p-1)\lambda_{0}t}\int_{0}^{t}(\tau+1)^{-\delta}e^{(p-1)\lambda_{0} \tau} d\tau\\
	\leq&\limsup_{t\rightarrow \infty} e^{-(p-1)\lambda_{0}t}\int_{0}^{t}e^{(p-1)\lambda_{0} \tau} d\tau\\
	=&\frac{1}{(p-1)\lambda_{0}}<\infty
\end{aligned}
\]
and
\[
\begin{aligned}
	\int_{0}^{\infty}\psi(t) e^{-(p-1)\lambda_{0}t}dt=&\int_{0}^{\infty}(t+1)^{-\delta}dt=\infty.
\end{aligned}
\]
This implies that if the function $\psi(t):=(t+1)^{-\delta}e^{(p-1)\lambda_{0} t}$ for $0\leq \delta\leq 1$, then we don't know whether or not the solution exists globally.\\

Now, we are going to check whether or not the solution exists globally, by considering the simple example. Let's consider the functions $\psi$ and $f$ defined by $\psi(t):=(t+1)^{-\frac{1}{2}}e^{\lambda_{0}t}$ and $f(u):=u^{2}$ in the equation \eqref{mainequation}. Then the equation \eqref{mainequation} follows that
\begin{equation}\label{exeq}
	\begin{cases}
		u_{t}(x,t)=\Delta u(x,t)+(t+1)^{-\frac{1}{2}}e^{\lambda_{0}t}u^{2},\,\,&(x,t)\in \Omega\times(0,t^{*}),\\
		u(x,t)=0,&(x,t)\in \partial\Omega\times(0,t^{*}),\\
		u(x,0)=u_{0}(x)\geq 0,&x\in \Omega.
	\end{cases}
\end{equation}
Now, we consider the eigenfunction $\phi_{0}$ to be $\sup_{x\in\Omega}\phi_{0}dx=1$, corresponding to the first Dirichlet eigenvalue $\lambda_{0}$. Suppose that the solution $u$ to the equation \eqref{exeq} exists globally. Multiplying the equation \eqref{exeq} by $\phi_{0}$ and integrating over $\Omega$, we use Green's theorem and Jensen's inequality to obtain
\[
\begin{aligned}
	&\int_{\Omega}u_{t}(x,t)\phi_{0}(x)dx\\=&\int_{\Omega}\phi_{0}(x)\Delta u(x,t)dx+(t+1)^{-\frac{1}{2}}e^{\lambda_{0}t}\int_{\Omega}u^{2}\phi_{0}(x)dx\\
	=&-\lambda_{0}\int_{\Omega}u(x,t)\phi_{0}(x)dx+(t+1)^{-\frac{1}{2}}e^{\lambda_{0}t}\left(\int_{\Omega}u(x,t)\phi_{0}(x)dx\right)^{2},
\end{aligned}
\]
for all $t> 0$. Putting $y(t):=\int_{\Omega}u(x,t)\phi_{0}(x)dx$, for $t\geq 0$, then $y(t)$ exists for all time $t$ and satisfies the following inequality
\begin{equation}\label{3111}
	\begin{cases}
		y'(t)\geq -\lambda_{0} y(t)+(t+1)^{-\frac{1}{2}}e^{\lambda_{0}t}y^{2}(t),\,\,t>0,\\
		y(0)=y_{0}:=\int_{\Omega}u_{0}(x)\phi_{0}(x)dx>0.
	\end{cases}
\end{equation}
Multiplying $e^{\lambda_{0}t}$ by the inequality \eqref{3111}, then we have
\[
\left[e^{\lambda_{0}t}y(t)\right]'\geq (t+1)^{-\frac{1}{2}}\left[e^{\lambda_{0}t}y(t)\right]^{2}\geq 0,
\]
for all $t>0$, which implies that
\[
\frac{d}{dt}\left[e^{\lambda_{0}t}y(t)\right]^{-1}\leq -(t+1)^{-\frac{1}{2}}
\]
for all $t>0$. Solving the differential inequality, then we obtain that
\[
y(t)\geq \frac{e^{-\lambda_{0} t}}{y_{0}^{-1}-\int_{0}^{t}(\tau+1)^{-\frac{1}{2}}d\tau},
\]
for all $t>0$, which leads a contradiction. Hence, the solution $u$ to the equation \eqref{exeq} blows up at finite time.

The above example implies that the condition $(C1)$ is no longer necessary condition for the nonexistence of global solution. In fact, the main part of this paper is focused on the condition $(C2)$ to see whether $(C2)$ is necessary and sufficient condition of the existence of the global solution.\\

On the other hand, if the function $f$ in the equation \eqref{mainequation} doesn't have a multiplicative property, then we cannot apply Meier's results. For example, let us consider the function $f(u)=\frac{u^{2}+u}{2}$. Then it is easy to see that $u\leq f(u)\leq u^{2}$ for $u\geq 1$ and $u^{2}\leq f(u)\leq u$ for $0\leq u\leq 1$. Therefore, we cannot determine the parameter $p$ in Theorem \ref{Meier}, in the case of $f(u)=\frac{u^{2}+u}{2}$. From this point of view, we have to consider a new method, so called the minorant method to deal with a function $f$ which is not multiplicative. In conclusion, we provide a formula $\frac{f\left(\lVert S(t)u_{0}\rVert_{\infty}\right)}{\lVert S(t)u_{0}\rVert_{\infty}}$ instead of $\lVert S(t)u_{0}\rVert_{\infty}^{p-1}$ to give a criterion of the existence of the global solution when the source term is $\psi(t)f(u)$.

\section{Main results}\label{main}
In this section, we firstly introduce the minorant function and the majorant function. Next, we prove the main theorem by using the minorant function and majorant function.\\

First of all, we discuss multiplicative minorants and majorants of the function $f$, which will play an important role in this work.
\begin{definition}
	For a function $f$, the minorant function $f_{m}:[0,\infty)\rightarrow [0,\infty)$ and the majorant function $f_{M}:[0,\infty)\rightarrow [0,\infty)$ are defined by
	\[
	\begin{aligned}
		&f_{m}(u):=\inf_{0<\alpha<1}\frac{f(\alpha u)}{f(\alpha)},\,\,&u\geq 0,\\
		&f_{M}(u):=\sup_{0<\alpha<1}\frac{f(\alpha u)}{f(\alpha)},\,\,&u\geq 0.
	\end{aligned}
	\]
\end{definition}
Then  the following properties:
\begin{itemize}
	\item[$\cdot$] $f(\alpha)f_{m}(u)\leq f(\alpha u)\leq f(\alpha)f_{M}(u)$, $0<\alpha<1$, $u>0$.
	\item[$\cdot$] If $g$ and $h$ be functions satisfying that
	\[
	f(\alpha)g(u)\leq f(\alpha u)\leq f(\alpha)h(u),\,\,0<\alpha<1,\,\, u>0,
	\]
	then it follows that $g(u)\leq f_{m}(u)$ and $f_{M}(u)\leq h(u)$, $u>0$.
\end{itemize}
It follows that $f_{m}$ and $f_{M}$ are natural to call the multiplicative minorant and majorant of a function $f$ respectively. In fact, the values of $f_{m}$ and $f_{M}$ depend strongly on the value of $f$ near zero, since
\[
f_{m}(u)\leq \inf_{0<\alpha<\frac{1}{u}}\frac{f(\alpha u)}{f(\alpha)}\,\,\mbox{ and }\,\,f_{M}(u)\geq \sup_{0<\alpha<\frac{1}{u}}\frac{f(\alpha u)}{f(\alpha)},
\]
for each $u\geq 1$. Also, if $f$ is convex, then $\frac{f(u)}{u}$ is nondecreasing. Then it is easy to see that the function $f$, the minorant $f_{m}$, and the majorant $f_{M}$ satisfy the following properties:
\begin{itemize}
	\item[(i)] $f_{m}\left(u\right)\leq \frac{f(u)}{f(1)}\leq f_{M}\left(u\right)$ for $u\geq 0$.
	\item[(ii)] $f_{m}\left(\frac{1}{\alpha}\right)\leq \frac{f(1)}{f(\alpha)}\leq f_{M}\left(\frac{1}{\alpha}\right)$ for $0<\alpha\leq 1$.
	\item[(iii)] $f_{m}(1)=f_{M}(1)=1$.
	\item[(iv)] $\frac{f_{m}(u)}{u}$ and $\frac{f_{M}(u)}{u}$ are nondecreasing in $(0,1)$.
	\item[(v)] $f_{m}(u)\leq u$ and $f_{M}(u)\leq u$ for $0<u\leq 1$, since $\frac{f(\alpha u)}{f(\alpha)}=\frac{f(\alpha u)}{\alpha u}\frac{\alpha}{f(\alpha)}u\leq u$.
	\item[(vi)] $\int_{\eta}^{\infty}\frac{ds}{f_{M}(s)}\leq f(1) \int_{\eta}^{\infty}\frac{ds}{f(s)}\leq \int_{\eta}^{\infty}\frac{ds}{f_{m}(s)}$ for $\eta\geq 1$.
	\item[(vii)] $\int_{0}^{1}\frac{ds}{f_{m}(s)}=\int_{0}^{1}\frac{ds}{f(s)}=\int_{0}^{1}\frac{ds}{f_{M}(s)}=\infty$.
\end{itemize}
We obtain from the property $(vi)$ that $\int_{\eta}^{\infty}\frac{ds}{f(s)}<\infty$ implies $\int_{\eta}^{\infty}\frac{ds}{f_{M}(s)}<\infty$. However, the converse is not true, in general. In fact, examples and detailed properties the minorant function $f_{m}$ and the majorant function $f_{M}$ were discussed in \cite{CH}.\\

Now, we introduce the definition of the blow-up solutions and global solutions.
\begin{definition}
	We say that a solution $u$ blows up at finite time $t^{*}$, if there exists $0<t^{*}<\infty$ such that $\lVert u(\cdot,t)\rVert_{\infty}\rightarrow \infty$ as $t\rightarrow t^{*}$. On the other hand, a solution $u$ exists globally whenever $\lVert u(\cdot,t)\rVert_{\infty}$ is bounded for each time $t\geq 0$.
\end{definition}

Now, we prove Theorem \ref{mainthm1}.
\begin{proof}
	$(ii)\Leftrightarrow(iii)$ : It is well-known that $\lVert S(t)u_{0}\rVert_{\infty} \sim e^{-\lambda_{0} t}$ for $t\geq 0$, for every nonnegative and nontrivial initial data $u_{0}\in C_{0}(\Omega)$. Therefore, we easily see that for a nonnegative and nontrivial initial data $u_{0}\in C_{0}(\Omega)$, there exist poisitive constants $c_{1}$ and $c_{2}$ such that
	\[
	c_{1}e^{\lambda_{0} t}f(\epsilon e^{\lambda_{0} t})\leq\frac{f\left(\epsilon \lVert S(t)u_{0}\rVert_{\infty}\right)}{\lVert S(t)u_{0}\rVert_{\infty}} \leq c_{2}e^{\lambda_{0} t}f(\epsilon e^{\lambda_{0} t}),
	\]
	for each $\epsilon>0$, since $f$ is nondecreasing. Hence, by considering $w_{0}:=\epsilon u_{0}$, then we have
	\[
	\frac{c_{1}}{\epsilon} \int_{0}^{t}e^{\lambda_{0} \tau}f(\epsilon e^{\lambda_{0} \tau})d\tau \leq \int_{0}^{t}\frac{f\left( \lVert S(\tau)w_{0}\rVert_{\infty}\right)}{\lVert S(\tau)w_{0}\rVert_{\infty}}d\tau\leq \frac{c_{2}}{\epsilon} \int_{0}^{t}e^{\lambda_{0} \tau}f(\epsilon e^{\lambda_{0} \tau})d\tau,
	\]
	which completes the proof.\\
	
	$(ii)\Rightarrow (i)$ : Suppose that
	\[
	\int_{0}^{\infty}\psi(t)e^{\lambda_{0}t}f\left(\epsilon e^{-\lambda_{0}t}\right)dt=\infty
	\]
	for every $\epsilon>0$.	First of all, we consider the eigenfunction $\phi_{0}$ to be $\sup_{x\in\Omega}\phi_{0}dx=1$, corresponding to the first Dirichlet eigenvalue $\lambda_{0}$. Suppose that the solution $u$ exists globally, on the contrary. Multiplying the equation \eqref{mainequation} by $\phi_{0}$ and integrating over $\Omega$, we use Green's theorem and Jensen's inequality to obtain
	\[
	\begin{aligned}
		\int_{\Omega}u_{t}(x,t)\phi_{0}(x)dx=&\int_{\Omega}\phi_{0}(x)\Delta u(x,t)dx+\psi(t)\int_{\Omega}f(u(x,t))\phi_{0}(x)dx\\
		\geq &-\lambda_{0}\int_{\Omega}u(x,t)\phi_{0}(x)dx+\psi(t)f\left(\int_{\Omega}u(x,t)\phi_{0}(x)dx\right),
	\end{aligned}
	\]
	for all $t> 0$. Putting $y(t):=\int_{\Omega}u(x,t)\phi_{0}(x)dx$, for $t\geq 0$, then $y(t)$ exists for all time $t$ and satisfies the following inequality
	\[
	\begin{cases}
		y'(t)\geq -\lambda_{0} y(t)+\psi(t)f(y(t)),\,\,t>0,\\
		y(0)=y_{0}:=\int_{\Omega}u_{0}(x)\phi_{0}(x)dx>0.
	\end{cases}
	\]
	Then the inequality can be written as
	\begin{equation}\label{eq01}
		\left[e^{\lambda_{0}t}y(t)\right]'\geq \psi(t)e^{\lambda_{0}t}f(y(t))\geq 0,
	\end{equation}
	for $t>0$ so that $e^{\lambda_{0}t}y(t)$ is nondecreasing on $[0,\infty)$. On the other hand, by the definition of $f_{m}$, we can find $v_{1}\in[0,1]$ such that $f_{m}=0$ on $[0,v_{1})$ and $f_{m}>0$ on $(v_{1},\infty)$. Then there exists $\epsilon>0$ such that $y(0)>\epsilon v_{1}$. \textit{i.e.} $v_{1}<\frac{y(0)}{\epsilon}\leq \frac{e^{\lambda_{0}t}y(t)}{\epsilon}$ for $t\geq 0$. Combining all these arguments, it follows from \eqref{eq01} and the definition of $f_{m}$ that
	\[
	\frac{\frac{e^{\lambda_{0}t}y(t)}{\epsilon}}{f_{m}\left(\frac{e^{\lambda_{0}t}y(t)}{\epsilon}\right)}\geq \frac{1}{\epsilon}\psi(t)e^{\lambda_{0}t}f(\epsilon e^{-\lambda_{0}t}),
	\]
	for all $t>0$. Now, define a function $F_{m}:(v_{1},\infty)\rightarrow (0,v_{\infty})$ by
	\[
	F_{m}(v):=\int_{v}^{\infty}\frac{dw}{f_{m}(w)},\,\,v>v_{1}
	\]
	where $v_{\infty}:=\lim_{v\rightarrow v_{1}}\int_{v}^{\infty}\frac{dw}{f_{m}(w)}$. Then it is easy to see that $F_{m}$ is well-defined continuous function which is a strictly decreasing bijection with its inverse $F_{m}^{-1}$ and $\lim_{v\rightarrow\infty}F_{m}(v)=0$. Integrating the inequality \eqref{eq01} over $[0,t]$, we obtain
	\[
	F_{m}\left(\frac{y(0)}{\epsilon}\right)-F_{m}\left(\frac{e^{\lambda_{0}t} y(t)}{\epsilon}\right)\geq \frac{1}{\epsilon}\int_{0}^{t}\psi(\tau)e^{\lambda_{0}\tau}f(\epsilon e^{-\lambda_{0}\tau})d\tau,
	\]
	for all $t\geq 0$. Hence, we obtain
	\[
	y(t)\geq \epsilon e^{-\lambda_{0} t}F_{m}^{-1}\left[F_{m}\left(\frac{y(0)}{\epsilon}\right)- \frac{1}{\epsilon}\int_{0}^{t}\psi(\tau)e^{\lambda_{0}\tau}f(\epsilon e^{-\lambda_{0}\tau})d\tau \right]
	\]
	for all $t\geq 0$, which implies that $y(t)$ cannot be global.\\
	
	$(i)\Rightarrow (ii)$ : Suppose that
	\[
	\int_{0}^{\infty}\psi(t)e^{\lambda_{0}t}f\left(\epsilon e^{-\lambda_{0}t}\right)dt<\infty
	\]
	for some $\epsilon>0$. We note that there exists a maximal interval $[0,m^{*})$ on which $f_{M}$ is finite. Then it is true that the integral $\int_{v}^{m^{*}}\frac{dw}{f_{M}(w)}$ is finite for each $v\in (0,m^{*})$, $\lim_{v\rightarrow 0}\int_{v}^{m^{*}}\frac{dw}{f_{M}(w)}=\infty$, and $\lim_{v\rightarrow m^{*}}\int_{v}^{m^{*}}\frac{dw}{f_{M}(w)}=0$. Then a function $F_{M}:(0,m^{*})\rightarrow (0,\infty)$ defined by
	\[
	F_{M}(v):=\int_{v}^{m^{*}}\frac{dw}{f_{M}(w)},\,\,v\in(0,m^{*})
	\]
	is a well-defined continuous function which is a strictly decreasing bijection with its inverse $F_{M}^{-1}$. Now, take a number $z_{0}$ such that
	\[
	0<z_{0}<F_{M}^{-1}\left[\frac{1}{\epsilon}\int_{0}^{\infty}\psi(t)e^{\lambda_{0}t}f\left(\epsilon e^{-\lambda_{0}t}\right)dt\right]
	\]
	and define a nondecreasing function $z:[0,\infty)\rightarrow [z_{0},\infty)$ by
	\[
	z(t):=F_{M}^{-1}\left[F_{M}(z_{0})-\frac{1}{\epsilon}\int_{0}^{t}\psi(\tau)e^{\lambda_{0}\tau}f\left(\epsilon e^{-\lambda_{0}\tau}\right)d\tau\right],\,\,t\geq 0.
	\]
	Then $z(t)$ is a bounded solution of the following ODE problem:
	\[
	\begin{cases}
		z'(t)=\frac{1}{\epsilon}\psi(t)e^{\lambda_{0}t}f\left(\epsilon e^{-\lambda_{0}t}\right)f_{M}\left(z(t)\right),\,\,t>0,\\
		z(0)=z_{0}.
	\end{cases}
	\]Now, consider a function $v(x,t):=e^{-\lambda_{0}t}\phi_{0}(x)$ on $\overline{\Omega}\times[0,\infty)$ which is a solution to the heat equation $v_{t}=\Delta v$ under the Dirichlet boundary condition. Let $\overline{u}(x,t):=z(t)v(x,t)$ for $(x,t)\in\overline{\Omega}\times[0,\infty)$. Since $f$ is convex, $\frac{f(u)}{u}$ is nondecrasing. Then it follows that
	\[
	\begin{aligned}
		\overline{u}_{t}(x,t)=&\Delta \overline{u}(x,t)+\psi(t)v(x,t)\left[\frac{f(\epsilon e^{-\lambda_{0}t})}{\epsilon e^{-\lambda_{0}t}}\right]f_{M}(z(t))\\
		\geq&\Delta \overline{u}(x,t)+\psi(t)v(x,t)\left[\frac{f(v(x,t))}{v(x,t)}\right]f_{M}(z(t))\\
		\geq&\Delta \overline{u}(x,t)+\psi(t)f( \overline{u}(x,t))
	\end{aligned}
	\]
	for all $(x,t)\in\Omega\times(0,\infty)$. It follows that $\overline{u}$ is the supersolution to the equation \eqref{mainequation}, which implies that $u$ exists globally.
\end{proof}
\begin{Cor}\label{corollary1}
	Let the function $\psi$ be a nonnegative continuous function and the function $f$ be a nonnegative continuous and quasi-multiplitive function. \textit{i.e.} there exist $\gamma_{2}\geq \gamma_{1}>0$ such that
	\begin{equation}\label{multiplicative}
		\gamma_{1} f(\alpha)f(u)\leq f(\alpha u)\leq \gamma_{2} f(\alpha)f(u),
	\end{equation}
	for $0<\alpha<1$ and $u>0$. Then the following statements are equivalent:
	\begin{itemize}
		\item[(i)] $\int_{0}^{\infty}\psi(t)e^{\lambda_{0}t}f\left(e^{-\lambda_{0}t}\right)dt=\infty$.
		\item[(ii)] $\int_{0}^{\infty}\psi(t)\frac{f\left(\lVert S(t)w_{0}\rVert_{\infty}\right)}{\lVert S(t)w_{0}\rVert_{\infty}}dt=\infty$ for every nonnegative and nontrivial $w_{0}\in C_{0}(\Omega)$.
		\item[(iii)] $
		\int_{0}^{\infty}\psi(t)\frac{dt}{F\left(e^{-\lambda_{0}t}\right)}=\infty,
		$
		where $F(v):=\int_{v}^{\infty}\frac{dw}{f(w)}$.
		\item[(iv)] There is no global solution to the equation \eqref{mainequation} for any initial data.
	\end{itemize}
\end{Cor}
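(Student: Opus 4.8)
The plan is to derive the corollary from Theorem \ref{mainthm1} applied to a pure power nonlinearity, after noticing that quasi-multiplicativity rigidly pins $f$ down. I would first isolate the following lemma (writing $g\asymp h$ for comparability up to positive multiplicative constants): \emph{if $f$ is continuous, positive on $(0,\infty)$, $f(0)=0$ and \eqref{multiplicative} holds, then $f(s)\asymp s^{\beta}$ on $(0,\infty)$ for some $\beta>1$.} The proof is a Hyers--Ulam stability argument. Setting $g(x):=\log f(e^{x})$, condition \eqref{multiplicative} says precisely that $|g(x+y)-g(x)-g(y)|\le C$ for $x<0$ and all $y\in\mathbb{R}$, with $C=\max\{|\log\gamma_{1}|,|\log\gamma_{2}|\}$. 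The classical stability of Cauchy's equation on the semigroup $(-\infty,0)$ produces an additive $a$ with $|g-a|\le C$ there, and continuity of $g$ forces $a(x)=\beta x$; this gives $f(s)\asymp s^{\beta}$ on $(0,1]$, and $f(1)\asymp f(s^{-1})f(s)$ propagates it to $[1,\infty)$. Since $f(0)=0$ makes $f(s)\to0$ as $s\to0^{+}$, necessarily $\beta>0$; and since $F(v)=\int_{v}^{\infty}dw/f(w)$ must be finite for (iii) to be meaningful while $F(v)\asymp\int_{v}^{\infty}w^{-\beta}dw$, necessarily $\beta>1$ (equivalently, \eqref{assum} holds). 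I would record this superlinearity as a standing hypothesis, since it is implicit in the appearance of $F$.

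Granting $f(s)\asymp s^{\beta}$ with $\beta>1$, I would show that (i), (ii) and (iii) are each equivalent to
\[
(\ast)\qquad\int_{0}^{\infty}\psi(t)\,e^{-(\beta-1)\lambda_{0}t}\,dt=\infty.
\]
For (i) this is immediate from $e^{\lambda_{0}t}f(e^{-\lambda_{0}t})\asymp e^{\lambda_{0}t}(e^{-\lambda_{0}t})^{\beta}=e^{-(\beta-1)\lambda_{0}t}$. For (ii) I would combine the same estimate with the standard bound $\lVert S(t)w_{0}\rVert_{\infty}\asymp e^{-\lambda_{0}t}$, valid for every nonnegative nontrivial $w_{0}\in C_{0}(\Omega)$, which gives $f(\lVert S(t)w_{0}\rVert_{\infty})/\lVert S(t)w_{0}\rVert_{\infty}\asymp e^{-(\beta-1)\lambda_{0}t}$ with constants depending only on $w_{0}$ and $\beta$. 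For (iii), the lemma gives $F(v)\asymp v^{1-\beta}$ as $v\to0$, hence $1/F(e^{-\lambda_{0}t})\asymp e^{-(\beta-1)\lambda_{0}t}$. In each case the two integrands are comparable for all large $t$, so the integrals diverge or converge together.

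To connect $(\ast)$ with (iv), I would use the comparison principle together with Theorem \ref{mainthm1} applied to the convex nonlinearity $u^{\beta}$ (legitimate because $\beta>1$, so $u^{\beta}$ satisfies \eqref{assum}). If $(\ast)$ holds, then $\int_{0}^{\infty}c_{1}\psi(t)e^{\lambda_{0}t}(\epsilon e^{-\lambda_{0}t})^{\beta}\,dt=\infty$ for every $\epsilon>0$, so by Theorem \ref{mainthm1} the problem with source $c_{1}\psi(t)u^{\beta}$ has no global solution for any data; since $f(u)\ge c_{1}u^{\beta}$, every solution of \eqref{mainequation} is a supersolution of that problem and hence blows up in finite time, which is (iv). Conversely, if $(\ast)$ fails, then $\int_{0}^{\infty}c_{2}\psi(t)e^{\lambda_{0}t}(\epsilon e^{-\lambda_{0}t})^{\beta}\,dt<\infty$ for every $\epsilon>0$, so by Theorem \ref{mainthm1} the problem with source $c_{2}\psi(t)u^{\beta}$ has a global solution $\bar u$ for sufficiently small data; since $f(u)\le c_{2}u^{\beta}$, this $\bar u$ is a supersolution of \eqref{mainequation}, whence \eqref{mainequation} has a global solution for small nonnegative nontrivial data, i.e.\ (iv) fails. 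Therefore (iv) $\Leftrightarrow(\ast)$, and all four statements are equivalent.

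The main obstacle is the structural lemma $f(s)\asymp s^{\beta}$; once it is in hand, the rest is bookkeeping with the single exponent $\beta$ and a routine comparison argument. Inside the lemma the delicate points are that \eqref{multiplicative} is assumed only for $\alpha\in(0,1)$, so the approximate Cauchy equation for $g$ lives only on the half-line $x<0$ --- which is exactly the range $v=e^{x}\to0$ that governs all four conditions --- and that one must extract $\beta$ as a genuine limit (not a mere $\limsup$) and invoke the regularity of $f$ to exclude pathological additive solutions. I would also flag that it is the exactness of a \emph{single} power $s^{\beta}$, rather than two unequal powers bounding $f$, that makes the two comparison directions meet; so quasi-multiplicativity, and not merely polynomial growth, is genuinely needed. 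Finally, the finiteness of $F$ tacitly forces $\beta>1$: without it --- e.g.\ $f(u)=u$ --- conditions (i) and (iii) need not agree.
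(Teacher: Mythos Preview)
Your argument is correct but takes a genuinely different route from the paper. The paper simply invokes Theorem~\ref{mainthm1} directly for $f$ to get (i)$\Leftrightarrow$(ii)$\Leftrightarrow$(iv) (quasi-multiplicativity collapses the ``for every $\epsilon>0$'' in Theorem~\ref{mainthm1}(ii) to the single case $\epsilon=1$), and then handles (i)$\Leftrightarrow$(iii) by a one-line substitution: writing $w=zs$ in $F(z)=\int_{z}^{\infty}\frac{dw}{f(w)}$ and applying \eqref{multiplicative} gives
\[
\frac{F(1)}{\gamma_{2}}\cdot\frac{z}{f(z)}\ \le\ F(z)\ \le\ \frac{F(1)}{\gamma_{1}}\cdot\frac{z}{f(z)}\qquad(0<z<1),
\]
so $1/F(e^{-\lambda_{0}t})\asymp e^{\lambda_{0}t}f(e^{-\lambda_{0}t})$ without ever identifying an exponent~$\beta$. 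Your Hyers--Ulam lemma $f(s)\asymp s^{\beta}$ is true but a much heavier tool for the same end. What your detour buys is that you apply Theorem~\ref{mainthm1} only to the genuinely convex nonlinearity $u^{\beta}$ and reach $f$ via comparison, so you never need $f$ itself to be convex --- a hypothesis the paper's proof tacitly imports when it cites Theorem~\ref{mainthm1} for $f$. In short, the paper's route is far shorter once convexity of $f$ is assumed; yours is longer but, as written, covers quasi-multiplicative $f$ that need not be convex.
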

\begin{proof}
	Theorem \ref{mainthm1} says that (i), (ii), and (iv) are equivalent. Therefore, we now discuss (iii).\\
	\textbf{(i)$\Leftrightarrow$(iii)} : Let $F(v)=\int_{v}^{\infty}\frac{dw}{f(w)}$. Then the assumption \eqref{multiplicative} follows that
	\[
	\int_{1}^{\infty}\frac{z}{\gamma_{2}f(z)f(s)}ds\leq \int_{z}^{\infty}\frac{dw}{f(w)}\leq \int_{1}^{\infty}\frac{z}{\gamma_{1}f(z)f(s)}ds,
	\]
	for $z>0$. This implies that
	\[
	\frac{F(1)}{\gamma_{2}}\frac{z}{f(z)}\leq F(z)\leq \frac{F(1)}{\gamma_{1}}\frac{z}{f(z)}.
	\]
	\textit{i.e.} $F(z)\sim \frac{z}{f(z)}$, $z>0$. Therefore, the proof is complete.
\end{proof}
\begin{remark}
	In 2014, Loayza and Paix\~{a}o \cite{LP} studied the conditions for existence and nonexistence of the global solutions to the equation \eqref{mainequation} under the general domain and obtained the following statements:
	\begin{itemize}
		\item[(i)] for every $w_{0}\in C_{0}(\Omega)$, there exist $\tau>0$ such that
		\begin{equation}\label{LPcon1}
			\int_{\lVert S(\tau)w_{0}\rVert_{\infty}}^{\infty}\frac{dw}{f(w)}\leq \int_{0}^{\tau}\psi(\sigma)d\sigma,
		\end{equation}
		then there is no global solution $u$ for every initial data,
		\item[(ii)] the solution $u$ exists globally for small initial data, whenever
		\begin{equation}\label{LPcon2}
			\int_{0}^{\infty}\psi(t)\frac{f(\lVert S(t)w_{0}\rVert_{\infty})}{\lVert S(t)w_{0}\rVert_{\infty}}dt <1,
		\end{equation}
		for some $w_{0}\in C_{0}(\Omega)$.
	\end{itemize}
	In fact, Corollary \ref{corollary1} imply that the conditions \eqref{LPcon1} and \eqref{LPcon2} have a strong relation, even though \eqref{LPcon1} and \eqref{LPcon2} have different formulas.
\end{remark}
Also, by using Corollary \ref{corollary1}, the example in Section \ref{discussion} can be characterized completely as follows:
\begin{example}
	Let the domain $\Omega$ be bounded in $\mathbb{R}^{N}$, $\psi(t):=(t+1)^{-\sigma}e^{kt}$, and $f(u):=u^{p}$ where $\sigma\in\mathbb{R}$, $k\in\mathbb{R}$, and $p>1$. Then the following statements are true.
	\begin{itemize}
		\item[(i)] If $k>(p-1)\lambda_{0}$, then there is no global solution $u$ to the equation \eqref{mainequation} for any nonnegative and nontrivial initial data $u_{0}\in C_{0}(\Omega)$.
		\item[(ii)] If $k<(p-1)\lambda_{0}$, then there exists a global solution to the equation \eqref{mainequation} for sufficiently small initial data $u_{0}\in C_{0}(\Omega)$.
		\item[(iii)] If $k=(p-1)\lambda_{0}$ and $\sigma\leq 1$, then there is no global solution $u$ to the equation \eqref{mainequation} for any nonnegative and nontrivial initial data $u_{0}\in C_{0}(\Omega)$.
		\item[(iv)] If $k=(p-1)\lambda_{0}$ and $\sigma>1$, then there exists a global solution to the equation \eqref{mainequation} for sufficiently small initial data $u_{0}\in C_{0}(\Omega)$.
	\end{itemize}
\end{example}

\section*{Conflict of Interests}
\noindent The authors declare that there is no conflict of interests regarding the publication of this paper.

\section*{Acknowledgments}
\noindent This work was supported by the National Research Foundation of Korea (NRF) grant funded by the Korea government (MSIT) (No. 2021R1A2C1005348).

\end{document}